\newcommand{\Ker}{\operatorname{Ker}}
\newcommand{\id}{\operatorname{id}}
\newcommand{\var}{\operatorname{var}}
\newcommand{\Der}{\operatorname{Der}}
\newcommand{\Inn}{\operatorname{Inn}}
\newcommand{\Out}{\operatorname{Out}}
   \theoremstyle{plain}%default
   \newtheorem{thm}{Theorem}[section]
   \newtheorem{prop}[thm]{Proposition}
   \newtheorem{lem}[thm]{Lemma}
   \newtheorem{cor}[thm]{Corollary}
   \theoremstyle{definition}
   \theoremstyle{remark}
 \numberwithin{equation}{section}
\author{V. Manuilov}
\date{}
\address{Moscow Center for Fundamental and Applied Mathematics, Moscow State University,
Leninskie Gory 1, Moscow, 
119991, Russia}
\email{manuilov@mech.math.msu.su}
\title[]{On Hochschild homology of uniform Roe algebras with coefficients in uniform Roe bimodules}
\subjclass[2010]{46M18 46L05 53C23}
\begin{document}

\begin{abstract}
It was shown recently by M. Lorentz and R. Willett that all bounded derivations of the uniform Roe algebras of metric spaces of bounded geometry are inner. Here we calculate the space of outer derivations of the uniform Roe algebras with coefficients in uniform Roe bimodules related to various metrics on the two copies of the given space. We also give some results on the higher Hochschild cohomology with coefficients in uniform Roe bimodules. 

\end{abstract}

\maketitle

\section*{Introduction}

Uniform Roe algebras play an important role in coarse geometry and in noncommutative geometry. Along with uniform Roe algebras, it is interesting to consider uniform Roe bimodules introduced in \cite{M-RJMP} and related to various metrics on the two copies of the same space. It was shown recently in \cite{Lorentz-Willett} that all bounded derivations of the uniform Roe algebra over any discrete metric space of bounded geometry are inner. 
Here we study bounded Hochschild cohomology of the uniform Roe algebras with coefficients in uniform Roe bimodules. Our main result is calculation of the first bounded Hochschild cohomology group with coefficients in uniform Roe bimodules for discrete metric spaces of bounded geometry. We also prove that any odd bounded Hochschild cohomology is non-trivial, and show that the K\"unneth theorem does not hold for Hochschild cohomology.

\section{Definitions and notation}

Let $X$ be a countable discrete metric space with a metric $d_X$. Let $l^2(X)$ denote the Hilbert space of square-summable functions on $X$ with the fixed orthonormal basis consisting of the delta functions $\delta_x$, $x\in X$, annd let $\mathbb B(l^2(X))$ (resp., $\mathbb K(l^2(X))$) denote the $C^*$-algebra of all bounded (resp., compact) operators on $l^2(X)$. For an operator $a\in\mathbb B(l^2(X))$, we denote its matrix elements by $a_{xy}$, $x,y\in X$, where $a_{x,y}=\langle \delta_x,a\delta_y\rangle$. An operator $a\in\mathbb B(l^2(X))$ has propagation not exceeding $r$ if $a_{xy}=0$ whenever $d_X(x,y)>r$. The norm closure $C_u^*(X)$ of the set of all bounded operators of finite propagation is a $C^*$-algebra called the uniform Roe algebra of $X$.

Recall that two metrics, $d_X$ and $b_X$, on $X$ are coarsely equivalent if there exists a homeomorphism $\varphi$ on $[0,\infty)$ such that $d_X(x,y)\leq\varphi(b_X(x,y))$ and $b_X(x,y)\leq\varphi(d_X(x,y))$ for any $x,y\in X$. Clearly, the uniform Roe algebra of $X$ does not depend on the metric within its coarse equivalence class.

A metric space $X$ is of {\it bounded geometry} if for any $r>0$, the number of points in any ball of radius $r$ is bounded by some constant depending on $r$. Bounded geometry condition also does not depend on the metric within its coarse equivalence class.

Let $(X,d_X)$ and $(Y,d_Y)$ be discrete metric spaces, and let $\rho$ be a metric on $X\sqcup Y$ such that $\rho|_X$ and $\rho|_Y$ are coarsely equivalent to $d_X$ and $d_Y$, respectively. We call such metric $\rho$ {\it compatible} with $d_X$ and $d_Y$. Let $M_\rho\subset\mathbb B(l^2(Y),l^2(X))$ be the norm closure of the set of all bounded operators from $l^2(Y)$ to $l^2(X)$ of finite propagation with respect to $\rho$. Clearly, $M_\rho$ is a right $C_u^*(Y)$-module and a left $C_u^*(X)$-module, and the two module structures commute. When $(Y,d_Y)=(X,d_X)$, $M_\rho$ is a $C_u^*(X)$-bimodule. We call it the {\it uniform Roe bimodule} determined by (the coarse equivalence class of) $\rho$. Note that this definition slightly differs from that in \cite{M-RJMP}.

In what follows, it may be useful to distinguish between the two copies of $X$ in $X\sqcup X=X\times\{0,1\}$. For shortness' sake, we write $x$ for $(x,0)$, and $x'$ for $(x,1)$. 

Let $\mathcal D=(D_n)_{n\in\mathbb N}$ be a sequence of subsets in $X$ such that 
\begin{itemize}
\item[(1)]
$D_n$ is non-empty for some $n\in\mathbb N$;
\item[(2)]
there exists $r>0$ such that $N_r(D_n)\subset D_{n+1}$ for any $n\in\mathbb N$, where $N_r(D)$ denotes the $r$-neighborhood of the set $D\subset X$.
\end{itemize} 
We call such sequences {\it expanding sequences}. Note that $\cup_{n\in\mathbb N}D_n=X$. Two expanding sequences, $\mathcal D$ and $\mathcal D'$, are equivalent if there exists a monotone self-map $\varphi$ of $\mathbb N$ such that $D_n\subset D'_{\varphi(n)}$ and $D'_n\subset D_{\varphi(n)}$ for any $n\in\mathbb N$.

Let $\rho$ be a metric on $X\sqcup X$ compatible with $d_X$. Set $D_n=\{x\in X:\rho(x,x')\leq n\}$. Then clearly $N_{1/2}(D_n)\subset D_{n+1}$ for any $n\in\mathbb N$, where the distance is with respect to the metric $\rho|_X$. Replacing it by the metrix $d_X$, we can find $r>0$ such that the $r$-neighborhood with respect to $d_X$ is contained in the $1/2$-neighborhood with respect to the metric $\rho|_X$, hence $\mathcal D=(D_n)_{n\in\mathbb N}$ is an expanding sequence. For a metric $\rho'$ coarsely equivalent to $\rho$ (and, therefore, compatible with $d_X$), let $\mathcal D'=(D'_n)_{n\in\mathbb N}$ be its expanding sequence. Then $\mathcal D$ and $\mathcal D'$ are equivalent. 

Let $A$ be a $C^*$-algebra, and let $M$ be a Banach $A$-$A$-bimodule. Let $C^n(A,M)$ denote the space of all bounded multilinear maps from $A^n$ to $M$. The Hochschild differential $\delta_n:C^n(A,M)\to C^{n+1}(A,M)$ is defined by
\begin{eqnarray*}
\delta_n(\varphi)(a_1,\ldots,a_{n+1})&=&a_1\varphi(a_2,\ldots,a_{n+1})\\
&+&\sum_{i=1}^n (-1)^i\varphi(a_1,\ldots,a_ia_{i+1},\ldots,a_{n+1})\\
&+&(-1)^{n+1}\varphi(a_1,\ldots,a_n)a_{n+1},
\end{eqnarray*}
$a_1,\ldots,a_{n+1}\in A$, $\varphi\in C^n(A,M)$. The Hochschild cohomology groups $HH^n(A,M)$ are standardly defined by $HH^n(A,M)=\Ker\delta_n/\operatorname{Im}\delta_{n-1}$. 

A bounded map $d:A\to M$ is a {\it derivation} if $d(ab)=d(a)b+ad(b)$ for any $a,b\in A$. A derivation $d$ is {\it inner} if there exists $m\in M$ such that $d(a)=[a,m]=am-ma$ for any $a\in A$. The set of all (resp., of all inner) derivations is denoted by $\Der(A,M)$ (resp., by $\Inn(A,M)$). The quotient $\Out(A,M)=\Der(A,M)/\Inn(A,M)$ is the space of the outer derivations.  
The group $HH^1(A,M)$ is canonically identified with $\Out(A,M)$. 

Our reference for Hochschild cohomology is \cite{Witherspoon}. Necessary modifications for Banach algebras and Banach bimodules can be found in \cite{Helemskii}.
We shall use the following result from \cite{Helemskii}, Part 3, Theorem 4.17. A proof for derivations when $M=A$, based on \cite{Kaplansky} can be found in \cite{Chernoff}.

\begin{thm}\label{Hel}
Let $H$ be a Hilbert space, Let $A$ be a $C^*$-algebra, $\mathcal K(H)\subset A\subset\mathbb B(H)$, and let $M=\mathbb B(H)$. Then $HH^n(A,M)=0$ for any $n\in\mathbb N$. In particular, all derivations of $A$ taking values in $M$ are inner.

\end{thm}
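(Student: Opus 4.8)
The key structural feature to exploit is that the coefficient bimodule $M=\mathbb B(H)$ is not an arbitrary Banach bimodule but a very special one: it is a dual Banach space, $\mathbb B(H)=(\mathcal L^1(H))^*$ with $\mathcal L^1(H)$ the trace-class operators, and it is precisely the multiplier algebra of the ideal $\mathcal K(H)$, which by hypothesis sits inside $A$ and acts nondegenerately on $H$. The left and right actions of $A$ on $M$ are just multiplication inside $\mathbb B(H)$, so they are separately continuous for the weak-$*$ topology (they are normal). My plan is to use these two facts — a dual predual and nondegenerate compacts — to show that $M$ is a \emph{relatively injective} $A$-bimodule, and then to invoke the standard homological principle that relative injectivity of the coefficients forces the relative $\mathrm{Ext}$-groups, i.e. $HH^n(A,M)$, to vanish for every $n\ge 1$. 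The case $n=1$ is exactly the assertion that every bounded derivation $A\to\mathbb B(H)$ is inner, so it falls out of the general statement.

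The engine behind relative injectivity is a splitting built from the Hilbert-space geometry of $H$. Fix a unit vector $\xi\in H$ and let $e\in\mathcal K(H)\subseteq A$ be the rank-one projection onto $\mathbb C\xi$; it serves as a \emph{local unit}. For the derivation case this is most transparent: given a derivation $d$, the relations $d(e)=d(e)e+ed(e)$ and $eae=\langle a\xi,\xi\rangle e$ let one manufacture an operator $T_\xi\in\mathbb B(H)$, assembled from the values of $d$ on products involving $e$, for which $d-[\,\cdot\,,T_\xi]$ vanishes on the corner $eAe$. For the higher cochains the same device produces maps $s_n\colon C^n(A,M)\to C^{n-1}(A,M)$ — obtained by inserting $e$ into one slot and using the module action to absorb it — which on cocycles satisfy a contracting-homotopy identity $\delta_{n-1}s_n+s_{n+1}\delta_n=\mathrm{id}$ in positive degrees. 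To remove the dependence on the single vector $\xi$ one replaces $e$ by an increasing net $(p_k)$ of finite-rank projections forming an approximate identity of $\mathcal K(H)$, and passes to a limit. Because $M$ is a dual space and every object produced is uniformly bounded by the norm of the cocycle, the Banach–Alaoglu theorem guarantees a weak-$*$ convergent subnet; the normality (separate weak-$*$ continuity) of the $A$-actions then ensures that the limiting operator, respectively the limiting homotopy, still obeys the required algebraic identities.

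The principal obstacle is the possible \emph{non-unitality} of $A$: since $1\in\mathbb B(H)$ need not belong to $A$, one cannot simply feed the unit into the bar complex to write down the usual contracting homotopy, and the compacts supply only an approximate unit. The whole argument is arranged to circumvent this. The unit $1$ lives in the module $M$ rather than in $A$, the projections $p_k$ approximate it, and the weak-$*$ limit recovers the effect of a genuine unit — but only because $M$ is a dual module with normal actions. The two points that require genuine care, and which I regard as the technical heart of the proof, are therefore: first, that the family of candidate primitives (the $T_k$, or the $s_n$ built from $p_k$) is uniformly bounded, so that a weak-$*$ cluster point exists; and second, that taking the weak-$*$ limit commutes with the multilinear coboundary relations, so that the cluster point is an honest primitive of the given cocycle. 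Granting these, the vanishing of $HH^n(A,\mathbb B(H))$ for all $n\ge 1$, and in particular the inner-ness of all $\mathbb B(H)$-valued derivations of $A$, follows; this is consistent with citing \cite{Helemskii}, and for $n=1$ with the Kaplansky–Chernoff argument.
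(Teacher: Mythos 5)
The paper never actually proves this statement---it imports it from Helemskii (Part~3, Theorem~4.17), citing Chernoff for the derivation case---so your proposal must stand on its own, and it has a genuine gap, located exactly at what you call the technical heart. The mechanism you propose for the contracting homotopy, ``inserting $e$ into one slot and using the module action to absorb it,'' cannot work as stated. If you insert a \emph{fixed} idempotent $e\in A$ (or, after your limiting procedure, the unit $1$) into the last slot of a cochain, then in the resulting computation the two terms $(-1)^n\varphi(a_1,\dots,a_{n-1},a_n e)$ and $(-1)^{n+1}\varphi(a_1,\dots,a_n)\,e$ cancel (exactly, in the unital limit), and one obtains the chain-map identity $s\delta=\delta s$ rather than $\delta s+s\delta=\mathrm{id}$. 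Indeed, if your claimed identity held, every unital Banach algebra would have vanishing Hochschild cohomology with arbitrary unital coefficients, which is false. The weak-$*$ limiting step cannot repair this: primitives built on the corners $p_kAp_k$ patch together at best over $\bigcup_k p_kAp_k$, whose \emph{norm} closure is only $\mathcal K(H)$; since cochains are merely norm continuous, not weak-$*$ continuous, weak-$*$ density of the corners in $A$ buys nothing, and the passage from cohomology over $\mathcal K(H)$ to cohomology over $A$ (for dual modules this is Johnson's neo-unital reduction theorem) is precisely the step your outline treats as automatic.

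What makes the theorem true---and renders the dual-predual, normality and Banach--Alaoglu apparatus unnecessary---is a \emph{vector-dependent} insertion, not insertion of a fixed local unit. Fix a unit vector $\xi\in H$, and for $\eta\in H$ let $\theta_{\eta,\xi}\in\mathcal K(H)\subset A$ be the rank-one operator $\zeta\mapsto\langle\xi,\zeta\rangle\eta$, so that $\theta_{\eta,\xi}\xi=\eta$ and $a\,\theta_{\eta,\xi}=\theta_{a\eta,\xi}$ for all $a\in A$. Define $s_n:C^n(A,\mathbb B(H))\to C^{n-1}(A,\mathbb B(H))$ by $\big(s_n\varphi(a_1,\dots,a_{n-1})\big)\eta=\varphi(a_1,\dots,a_{n-1},\theta_{\eta,\xi})\,\xi$; this is multilinear and bounded with $\|s_n\varphi\|\le\|\varphi\|$. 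Now evaluate the cocycle identity $\delta_n\varphi(a_1,\dots,a_n,\theta_{\eta,\xi})=0$ on the vector $\xi$: the term $(-1)^n\varphi(a_1,\dots,a_{n-1},a_n\theta_{\eta,\xi})\,\xi$ equals $(-1)^n\big(s_n\varphi(a_1,\dots,a_{n-1})\,a_n\big)\eta$ because $a_n\theta_{\eta,\xi}=\theta_{a_n\eta,\xi}$, i.e.\ it is absorbed as the last term of the coboundary $\delta_{n-1}(s_n\varphi)$, while the term $(-1)^{n+1}\varphi(a_1,\dots,a_n)\,\theta_{\eta,\xi}\xi=(-1)^{n+1}\varphi(a_1,\dots,a_n)\eta$ survives and produces the identity. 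So the cancellation that kills the fixed-idempotent homotopy does not occur here, and one gets $\varphi=\pm\,\delta_{n-1}(s_n\varphi)$ for every cocycle $\varphi$ and every $n\ge1$. This single formula is Chernoff's argument when $n=1$ (the operator $T\eta=d(\theta_{\eta,\xi})\xi$ implements $d$) and is in essence Helemskii's proof (projectivity of the left $A$-module $H\cong Ae$); it requires no approximate identity, no weak-$*$ limits, and works verbatim for non-unital $A$. I recommend replacing your homotopy construction by this one.
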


\section{Some $C^*$-algebras of functions on $X$}

Let $C_b(X)$ denote the algebra of all bounded continuous functions on $X$.
Given an expanding sequence $\mathcal D$ in $X$, let us define two $C^*$-algebras of functions on $X$ related to $\mathcal D$, $C_0(X,\mathcal D)$ and $C_h(X,\mathcal D)$. 

1. Let $f\in C_b(X)$. We say that $f$ vanishes far from $\mathcal D$ if 
$$
\lim_{n\to\infty}\sup_{x\in X\setminus D_n}|f(x)|=0.
$$
Define the $C^*$-algebra of all bounded functions vanishing far from $\mathcal D$ by $C_0(X,\mathcal D)$. If $\mathcal D'$ is equivalent to $\mathcal D$ then we get the same algebra. 

2. Let $f\in C_b(X)$ be a bounded function on $X$. For $r>0$, set 
$$
\var_{x,r}f=\max_{y\in X:d_X(x,y\leq r)}|f(y)-f(x)|. 
$$
We say that $f\in C_b(X)$ satisfies the Higson condition with respect to $\mathcal D$ if 
$$
\lim_{n\to\infty}\sup_{x\in X\setminus D_n}\var_{x,r}f=0
$$   
for any $r>0$. Clearly, this condition depends only on the coarse equivalence classs of $d_X$, but not on $d_X$ itself. It is also depends only on the equivalence class of expanding sequences. Define the Higson algebra $C_h(X,\mathcal D)$ with respect to $\mathcal D$ as the algebra of all bounded functions on $X$ satisfying the Higson condition. Roughly speaking, $C_h(X,\mathcal D)$ consists of bounded functions with variation vanishing far from $\mathcal D$. Note that if all $D_n$, $n\in\mathbb N$, are bounded then we get the classical Higson algebra \cite{Roe}, i.e. the algebra of continuous functions on the Higson compactification of $X$.

For shortness' sake, for a function $f\in C_b(X)$, we use the same notation $f$ for the operator of multiplication by $f$ on $l^2(X)$. 

\begin{lem}\label{functions0}
Let $f\in C_b(X)$, let $\rho$ be a metric on $X\sqcup X$ compatible with $d_X$, and let $\mathcal D$ be the expanding sequence determined by $\rho$. Then 
$f\in M_\rho$ if and only if
\begin{equation}\label{e1}
\lim_{n\to\infty}\sup_{x\in X\setminus D_n}|f(x)|=0.
\end{equation}

\end{lem}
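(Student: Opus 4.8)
The plan is to compute the matrix of the multiplication operator $f$ in the primed/unprimed picture and read off the propagation condition directly. Writing $f$ for the operator on $l^2(X)$ that sends $\delta_y\mapsto f(y)\delta_y$, and regarding it as an element of $\mathbb B(l^2(X),l^2(X))$ whose source is indexed by the primed copy and whose target is indexed by the unprimed copy, its matrix entries are $f_{x,y'}=\langle\delta_x,f\delta_{y'}\rangle$, which vanish unless $x=y$, the only nonzero entries being the diagonal ones $f_{x,x'}=f(x)$. The key structural observation is that the entry $f(x)$ is located at $\rho$-distance $\rho(x,x')$; hence, since $D_r=\{x:\rho(x,x')\leq r\}$ and the sequence $(D_n)$ is increasing, a diagonal operator of this type has propagation at most $r$ with respect to $\rho$ precisely when it is supported on $D_r$.

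For the ``if'' direction I would truncate: set $f_n=f\chi_{D_n}$, the operator of multiplication by $f$ restricted to $D_n$. By the observation above $f_n$ has propagation at most $n$, so $f_n\in M_\rho$; and since a diagonal multiplication operator has norm equal to the supremum of the absolute values of its entries, $\|f-f_n\|=\sup_{x\in X\setminus D_n}|f(x)|$, which tends to $0$ under hypothesis \eqref{e1}. Thus $f$ is a norm limit of elements of $M_\rho$ and therefore lies in $M_\rho$.

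For the ``only if'' direction, fix $\varepsilon>0$ and, using $f\in M_\rho$, choose a finite-propagation operator $a$, of propagation at most some $r$, with $\|f-a\|<\varepsilon$. I would then exploit the elementary bound $|b_{x,y'}|=|\langle\delta_x,b\delta_{y'}\rangle|\leq\|b\|$ and compare diagonal entries: for every $x$ with $\rho(x,x')>r$, finite propagation forces $a_{x,x'}=0$, whence $|f(x)|=|f_{x,x'}-a_{x,x'}|\leq\|f-a\|<\varepsilon$. This says $\sup_{x\in X\setminus D_r}|f(x)|\leq\varepsilon$, and since $X\setminus D_n\subseteq X\setminus D_r$ for all $n\geq r$, the same estimate holds with $D_r$ replaced by $D_n$. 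As $\varepsilon>0$ is arbitrary, \eqref{e1} follows.

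The argument is essentially bookkeeping, and both implications reduce to two standard facts: that matrix entries are dominated by the operator norm, and that $(D_n)$ is increasing. The one point that genuinely requires care, and which I expect to be the main obstacle, is the correct identification of the matrix of $f$ in the two-copy picture, namely that multiplication operators are exactly those supported on the ``diagonal'' $\{(x,x')\}$ and that the distance governing the propagation of the entry $f(x)$ is $\rho(x,x')$. Once this identification is fixed, the remainder is routine.
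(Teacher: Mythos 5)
Your proof is correct and follows essentially the same route as the paper's: the ``if'' direction by truncation $f\chi_{D_n}$, which has propagation at most $n$, and the ``only if'' direction by comparing $f$ with a finite-propagation approximant and reading off the $(x,x')$ entries. The only cosmetic difference is that the paper first replaces the approximant by its diagonal part (using contractivity of that compression), whereas you compare the entries directly via $|b_{x,y'}|\leq\|b\|$; both amount to the same estimate.
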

\begin{proof}
For $f\in C_b(X)$, let $f_n=f\chi_{D_n}$, where $\chi_D$ denotes the characteristic function of the set $D\subset X$. 
If $f\in M_\rho$ then for any $\varepsilon>0$ there exists $n>0$ and $m\in M_\rho$ of propagation less than $n$ such that $\|f-m\|<\varepsilon$. Let $m_0$ be the diagonal part of $m$. Then $\|f-m_0\|\leq \|f-m\|<\varepsilon$, and $m_0=g$ has propagation less than $n$. The latter means that $g(x)=0$ whenever $\rho(x,x')>n$, which is the same as $x\in X\setminus D_n$, so $|f(x)|=|f(x)-g(x)|<\varepsilon$ for any $x\in X\setminus D_n$. 

If (\ref{e1}) holds then $g_n=f\chi_{D_n}$ has propagation $\leq n$, hence $f$ lies in the closure of finite propagation diagonal operators, i.e. $f\in M_\rho$.

\end{proof}

\begin{cor}\label{functions1}
The following conditions are equivalent:
\begin{enumerate}
\item
the algebra $C_0(X,\mathcal D)$ is unital;
\item
$D_n=X$ for some $n\in\mathbb N$;
\item
$M_\rho=C_u^*(X)$.
\end{enumerate}

\end{cor}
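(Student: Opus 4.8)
The plan is to prove the two equivalences $(1)\Leftrightarrow(2)$ and $(2)\Leftrightarrow(3)$, using throughout that an expanding sequence is increasing, since $D_n\subset N_r(D_n)\subset D_{n+1}$, and satisfies $\bigcup_{n}D_n=X$; thus condition $(2)$ is the same as saying that $X\setminus D_n=\emptyset$ for all large $n$. For $(2)\Rightarrow(1)$ I would simply note that when $X\setminus D_n$ is eventually empty the defining condition of $C_0(X,\mathcal D)$ holds vacuously for every $f\in C_b(X)$, so $C_0(X,\mathcal D)=C_b(X)$ is unital. For the converse I would use that the unit of a $C^*$-subalgebra of $C_b(X)$ is a self-adjoint idempotent, hence a characteristic function $\chi_S$ of some $S\subset X$. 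Since each $\chi_{D_n}$ lies in $C_0(X,\mathcal D)$ (it vanishes on $X\setminus D_m$ for $m\geq n$), the identity $\chi_S\chi_{D_n}=\chi_{D_n}$ forces $D_n\subset S$ for all $n$, so $S=X$ and the unit equals $1$; but $1\in C_0(X,\mathcal D)$ means $\sup_{x\in X\setminus D_n}1\to 0$, which requires $X\setminus D_n$ to be eventually empty, i.e. $(2)$.

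For $(3)\Rightarrow(2)$ I would apply Lemma \ref{functions0} to the constant function $f=1$: as $1\in C_u^*(X)$ always, the hypothesis $M_\rho=C_u^*(X)$ gives $1\in M_\rho$, and condition (\ref{e1}) for $f=1$ is precisely the statement that $X\setminus D_n$ is eventually empty, that is, $(2)$. The real content is $(2)\Rightarrow(3)$. Here I would set $s=\sup_{x\in X}\rho(x,x')$, which is finite exactly because $D_n=X$ for some $n$, and take a homeomorphism $\varphi$ of $[0,\infty)$ witnessing the coarse equivalence of $\rho|_X$ and $d_X$, so that $\rho(x,y)\leq\varphi(d_X(x,y))$ and $d_X(x,y)\leq\varphi(\rho(x,y))$. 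Combining these with the triangle inequality and the bound $\rho(x,x')\leq s$ shows that finite propagation with respect to $d_X$ and with respect to $\rho$ are the same up to a change of constant: if $d_X(x,y)\leq r$ then $\rho(x,y')\leq\rho(x,y)+\rho(y,y')\leq\varphi(r)+s$, while if $\rho(x,y')\leq r$ then $\rho(x,y)\leq r+s$, hence $d_X(x,y)\leq\varphi(r+s)$. Therefore an operator has finite $d_X$-propagation if and only if it has finite $\rho$-propagation, and passing to norm closures gives $M_\rho=C_u^*(X)$.

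The step I expect to be the main obstacle is $(2)\Rightarrow(3)$, since it is the only place where one must genuinely relate the two geometries: one has to convert a bound on the vertical distances $\rho(x,x')$, together with the coarse equivalence of $\rho|_X$ with $d_X$, into a comparison between $\rho$-propagation, which constrains mixed pairs $(x,y')$, and $d_X$-propagation, which constrains pairs $(x,y)$ inside a single copy of $X$. The remaining implications are then routine, each reducing either to the observation that $(2)$ means $X\setminus D_n$ is eventually empty or to a direct application of Lemma \ref{functions0}.
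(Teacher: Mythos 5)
Your proposal is correct and takes essentially the same route as the paper: both hinge on applying the characterization (\ref{e1}) of Lemma \ref{functions0} to the constant function $1$ (tying unitality and $M_\rho$ to the vanishing of $X\setminus D_n$), and on the triangle-inequality comparison showing that when $\sup_x\rho(x,x')<\infty$, finite $\rho$-propagation coincides with finite propagation within one copy of $X$. Your write-up merely adds detail the paper leaves implicit --- the characteristic-function argument identifying the unit of $C_0(X,\mathcal D)$, and the explicit homeomorphism $\varphi$ witnessing the coarse equivalence of $\rho|_X$ with $d_X$ --- and organizes the implications as two pairwise equivalences rather than the paper's cycle $(1)\Leftrightarrow(2)$, $(2)\Rightarrow(3)$, $(3)\Rightarrow(1)$.
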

\begin{proof}
Equivalence of (1) and (2) follows from (\ref{e1}). If (2) holds then $\rho(x,x')\leq n$ for any $x$, hence $|\rho(x,y)-\rho(x,y')|\leq n$ for any $x,y\in X$, i.e. finite propagation with respect to $\rho$ is the same as finite propagation with respect to $\rho|_X$, hence $M_\rho=C_u^*(X)$. Finally, if $M_\rho=C_u^*(X)$ then $1\in M_\rho$, so (3) implies (1).

\end{proof}

We shall write $C_0(X,\mathcal D)\widetilde{\phantom{a}}$ for $C_0(X,\mathcal D)$ if it satisfies the conditions of Corollary \ref{functions1} and for the unitalization of $C_0(X,\mathcal D)$ otherwise.

\begin{lem}\label{functions2}
Suppose that $X$ is of bounded geometry, $f\in C_b(X)$. Let $\rho$ be a metric on $X\sqcup X$ compatible with $d_X$, and let $\mathcal D$ be the expanding sequence determined by $\rho$. Then
$[a,f]\in M_\rho$ for any $a\in C_u^*(X)$ if and only if
\begin{equation}\label{e2}
\lim_{n\to\infty}\sup_{x\in X\setminus D_n}\var_{x,r}f=0
\end{equation}
for any $r>0$.

\end{lem}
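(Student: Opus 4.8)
The plan is to prove both implications by reducing the commutator condition to estimates on matrix elements, using the bounded geometry hypothesis to control how far-off-diagonal contributions accumulate. Throughout I will write $a=(a_{xy})$ for the matrix of $a\in C_u^*(X)$ and recall that $f$ acts diagonally as $(f)_{xy}=f(x)\delta_{xy}$, so that the commutator has the explicit matrix entries
\begin{equation*}
[a,f]_{xy}=a_{xy}\,\bigl(f(y)-f(x)\bigr).
\end{equation*}
The key observation is that the factor $f(y)-f(x)$ is exactly what $\var_{x,r}f$ measures once $a$ has propagation $\le r$, i.e. $a_{xy}=0$ for $d_X(x,y)>r$. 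By Lemma \ref{functions0} applied not to $f$ itself but to the single operator $[a,f]$ (whose membership in $M_\rho$ I must characterize), the whole problem is to show that condition \eqref{e2} is equivalent to $[a,f]$ vanishing far from $\mathcal D$ in the appropriate operator sense, for every $a$.

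For the ``if'' direction I would argue by approximation. Assume \eqref{e2} holds for every $r>0$. First reduce to finite propagation $a$: since such operators are norm-dense in $C_u^*(X)$ and $f$ is bounded, $a\mapsto[a,f]$ is bounded, so it suffices to treat $a$ of propagation $\le r$. For such $a$, the matrix $[a,f]_{xy}=a_{xy}(f(y)-f(x))$ is supported where $d_X(x,y)\le r$, and for $x\in X\setminus D_n$ the entries are bounded by $\|a\|\cdot\var_{x,r}f$. I would then write $[a,f]=[a,f]\chi_{D_n}+[a,f]\chi_{X\setminus D_n}$ (splitting the column index) and show the second piece has small norm, using bounded geometry to bound the operator norm of a band matrix by (a constant times) the supremum of its entries. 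Concretely, a matrix supported in a band of width $r$ has norm controlled by $\bigl(\sup_{x,y}|c_{xy}|\bigr)$ times the maximal number of points in an $r$-ball, which is finite by bounded geometry. Since $[a,f]\chi_{D_n}$ has finite propagation with respect to $\rho$ (as $\chi_{D_n}$ forces the column index into $D_n$, controlling $\rho(x,y)$), Lemma \ref{functions0}'s second half gives $[a,f]\in M_\rho$.

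For the ``only if'' direction, suppose \eqref{e2} fails, so there is some $r>0$, some $\varepsilon>0$, and a sequence $x_k\to\infty$ (escaping every $D_n$) together with points $y_k$ with $d_X(x_k,y_k)\le r$ and $|f(y_k)-f(x_k)|\ge\varepsilon$. I would build a single operator $a$ of propagation $\le r$ realizing these oscillations simultaneously — taking $a$ to be (a diagonal-plus-shift combination of) the partial isometries $\delta_{x_k}\mapsto\delta_{y_k}$, thinned out via bounded geometry so the chosen pairs are mutually far apart and the resulting matrix is genuinely bounded. Then $[a,f]$ has entries of size $\ge\varepsilon$ at positions $(x_k,y_k)$ with $x_k$ escaping to infinity far from $\mathcal D$, so by the first (``only if'') half of Lemma \ref{functions0} applied to $[a,f]$ it cannot lie in $M_\rho$, giving the contrapositive.

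The main obstacle will be the passage between operator norms and matrix-entry suprema in both directions: I need bounded geometry to convert entrywise smallness of a band matrix into operator-norm smallness (for ``if''), and I need a careful thinning argument to extract from the failure of \eqref{e2} a genuinely bounded operator — not merely a formal infinite matrix — whose commutator with $f$ stays large far from $\mathcal D$ (for ``only if''). Controlling overlaps when the witnessing pairs $(x_k,y_k)$ may cluster is exactly where bounded geometry does the essential work, and that is the step I expect to require the most care.
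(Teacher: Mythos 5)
Your proposal is correct. Your ``if'' direction is essentially the paper's own argument: the paper cuts $[a,f]$ down by the projection $P_n$ onto $l^2(D_n)$, bounds $\|[a,f]-P_n[a,f]P_n\|\leq N_r\|a\|\sup_{x\in X\setminus D_n}\var_{x,r}f$ by the same band-matrix (Schur-type) estimate coming from bounded geometry, and checks that $P_n[a,f]P_n$ has $\rho$-propagation at most $n+r$; your one-sided cutoff $[a,f]\chi_{D_n}$ is an inessential variant, and both treatments leave the reduction to finite-propagation $a$ implicit in the same way. Where you genuinely diverge is the ``only if'' direction. The paper does not pass to the contrapositive and needs no thinning: for each $r$ it tests the hypothesis against a single ``all-ones band'' operator $a_r$, with $(a_r)_{xy}=1$ when $\rho(x,y)\leq r$ and $0$ otherwise (bounded precisely because of bounded geometry), approximates $[a_r,f]$ within $1/k$ by an operator $m$ of finite $\rho$-propagation $l_k$, and observes that for $x\notin D_n$ with $n>r+l_k$ the triangle inequality forces $\rho(x,y')>l_k$, hence $m_{xy}=0$ and $|f(x)-f(y)|<1/k$ on the whole band at once. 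Your route --- extracting witness pairs $(x_k,y_k)$ from the failure of \eqref{e2} and assembling a partial isometry --- also works, and even buys a little: once the chosen points are made pairwise distinct, your operator is automatically a norm-one partial isometry, so this half does not really need bounded geometry at all (it enters only in the ``if'' half and in the boundedness of the paper's $a_r$); the price is the subsequence bookkeeping that the paper's universal test operator avoids. One thing to repair in your write-up: Lemma \ref{functions0} as stated concerns only multiplication operators, so it cannot literally be ``applied to $[a,f]$''; what you actually need, in both directions, is the entrywise fact from its proof --- if $T\in M_\rho$ is approximated within $\varepsilon$ by an operator of $\rho$-propagation $l$, then $|T_{xy}|<\varepsilon$ whenever $\rho(x,y')>l$ --- which holds for arbitrary operators by the same one-line argument. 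Note also that the propagation relevant for $M_\rho$ is measured by the mixed distance $\rho(x,y')$, not $\rho(x,y)$: it is $y\in D_n$ (i.e.\ $\rho(y,y')\leq n$) together with $d_X(x,y)\leq r$ that bounds $\rho(x,y')$ and makes your cutoff term have finite propagation.
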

\begin{proof}
First, suppose that $[a,f]\in M_\rho$ for any $a\in C_u^*(X)$. Fix $r>0$ and define $a_r\in C_u^*(X)$ by setting $(a_r)_{xy}=1$ when $\rho(x,y)\leq r$, and $(a_r)_{xy}=0$ otherwise. This is a bounded operator due to bounded geometry of $X$. Let $x\in X\setminus D_n$, which means that $\rho(x,x')>n$. As $[a_r,f]\in M$, for any $k\in\mathbb N$ there exist $l_k>0$ and $m\in M$ of propagation less than $l_k$ such that $\|[a_r,f]-m\|<\frac{1}{k}$. The latter means, in particular, that $|f(x)-f(y)-m_{xy}|<\frac{1}{k}$ for any $x,y\in X$ such that $\rho(x,y)\leq r$. 

By the triangle inequality, $\rho(x,y')\geq \rho(x,x')-\rho(x,y)>n-r$. If $n>r+l_k$ then $\rho(x,y')>l_k$, therefore, $m_{x,y}=0$, and $|f(x)-f(y)|<\frac{1}{k}$, hence $\var_{x,r}f<\frac{1}{k}$ for $x\in X\setminus D_n$ when $n$ is sufficiently great. 

Now, suppose that $\lim_{n\to\infty}\sup_{x\in X\setminus D_n}\var_{x,r}f=0$ for any $r>0$. Fix $r>0$, and let $a\in C_u^*(X)$ has propagation $\leq r$ with respect to $\rho$. Let $P_n$ denote the projection onto $l^2(D_n)\subset l^2(X)$.  Let $c=P_n[a,f](1-P_n)$. Then $c_{xy}=0$ for $x\in D_n$, so 
$$
|c_{xy}|=|a_{xy}(f(x)-f(y))|\leq \|a\|\var_{x,r}f\leq \|a\|\sup_{x\in X\setminus D_n}\var_{x,r}f 
$$
for any $x,y\in X$.
Similarly we can estimate elements of the two other corners, $(1-P_n)[a,f]P_n$ and $(1-P_n)[a,f](1-P_n)$. Let $b=[a,f]-P_n[a,f]P_n$, then
$$
|b_{xy}|\leq \|a\|\sup_{x\in X\setminus D_n}\var_{x,r}f 
$$
for any $x,y\in X$.

As $X$ has bounded geometry, the number of points in each ball of radius $r$ is bounded, let $N_r$ be this bound. For any $b\in C_u^*(X)$ of propagation $\leq r$ we have $\|b\|\leq N_r\sup_{x,y\in X}|b_{xy}|$. Hence, 
$$
\|[a,f]-P_n[a,f]P_n\|=\|b\|\leq N_r\|a\|\sup_{x\in X\setminus D_n}\var_{x,r}f,
$$
so for any $\varepsilon>0$ we can find $n_0\in\mathbb N$ such that $\|[a,f]-P_n[a,f]P_n\|<\varepsilon$ for any $n\geq n_0$.

It remains to show that $P_n[a,f]P_n\in M$. If $(P_n[a,f]P_n)_{xy}\neq 0$ then $x\in D_n$ (hence $\rho(x,x')\leq n$), and $\rho(x,y)\leq r$, therefore $\rho(x,y')\leq \rho(x,x')+\rho(x,y)\leq n+r$, thus $P_n[a,f]P_n$ has propagation less than $n+r$, hence $[a,f]\in M$.      

\end{proof}

\section{Examples}

In this section we give some examples of metrics on $X\sqcup X$ compatible with the metric $d_X$ on $X$, and related bimodules, expanding sequences and algebras. Recall that in order to distinguish points in the two copies of $X$ we write $x$ for the point $x$ in the first copy of $X$ and $x'$ for the same point in the second copy of $X$.

Let $A,B\subset X$, and let $\alpha:A\to B$ be an isometric bijection. Then we can define a metric $\rho^{A,\alpha,B}$ on $X\sqcup X$ by
$$
\rho^{A,\alpha,B}(x,y)=\rho^{A,\alpha,B}(x',y')=d_X(x,y); 
$$
$$
\rho^{A,\alpha,B}(x,y')=\inf_{z\in A}[d_X(x,z)+1+d_X(\alpha(z),y)],
$$
$x,y\in X$.
%If $A=B=X$ then we abbreviate $\rho^{A,\alpha,B}$ to $\rho^\alpha$, where $\alpha$ is an isometry of $X$.
If $B=A$ and $\alpha=\id_A$ then we abbreviate $\rho^{A,\id,A}$ to $\rho^A$. There are two extreme cases for metrics of the form $\rho^A$. 

If $A=X$ then $M_{\rho^X}=C_u^*(X)$, the corresponding expanding sequence is constant, $D_n=X$ for any $n\in\mathbb N$. Both algebras, $C_0(X,\mathcal D)$ and $C_h(X,\mathcal D)$, coincide with the whole $C_b(X)$.   

If $A=\{x_0\}$ for some point $x_0\in X$ then $M_{\rho^{x_0}}=\mathbb K(l^2(X))$, the corresponding expanding sequence $D_n=B_{\frac{n-1}{2}}(x_0)$ is the sequence of balls of radii $\frac{n-1}{2}$ centered at $x_0$. The algebra $C_0(X,\mathcal D)$ is the usual $C_0(X)$, while the algebra $C_h(X,\mathcal D)$ is the Higson algebra $C_h(X)$ of functions with variation vanishing at infinity.

The set of all uniform Roe bimodules over $C_u^*(X)$ is partially ordered by inclusion. If $M_{\rho_1}\subset M_{\rho_2}$, and $\partial D_i$ corresponds to $\rho_i$, $i=1,2$, then it is easy to see that $C_h(X,\mathcal D_2)/C_0(X,\mathcal D_2)\subset C_h(X,\mathcal D_1)/C_0(X,\mathcal D_1)$. The uniform Roe bimodule $\mathbb K(l^2(X))$ is the minimal one. 

Given an expanding sequence $\mathcal E=\{E_n\}_{n\in\mathbb N}$ of non-empty sets, we can define a metric $\rho^\mathcal E$ compatible with $d_X$ by setting $\rho^\mathcal E|_{X}=d_X$ and 
$$
\rho^\mathcal E(x,y')=\inf_{n\in\mathbb N}\inf_{z\in E_n}[d_X(x,z)+n+d_X(z,y)].
$$
It was shown in \cite{M2} that this is a metric and that its expanding sequence is equivalent to $\mathcal E$.

\section{Invariant submodules in uniform Roe bimodules}

\begin{prop}\label{HH1}
$HH^0(C_u^*(X),M_\rho)\cong\left\lbrace\begin{array}{cl}\mathbb C&\mbox{if\ }M_\rho=C_u^*(X),\\0&\mbox{otherwise.}\end{array}\right.$

\end{prop}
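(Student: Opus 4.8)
The plan is to unwind the definition of $HH^0$ and reduce the computation to a commutant calculation. Specializing the Hochschild differential to $n=0$, the cochain space $C^0(C_u^*(X),M_\rho)$ is just $M_\rho$, and for $m\in M_\rho$ one has $\delta_0(m)(a)=am-ma=[a,m]$. Hence
\[
HH^0(C_u^*(X),M_\rho)=\Ker\delta_0=\{m\in M_\rho : [a,m]=0\ \text{for all}\ a\in C_u^*(X)\},
\]
the set of elements of the bimodule $M_\rho$ commuting with all of $C_u^*(X)$.

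The first key step is to observe that $\mathbb K(l^2(X))\subset C_u^*(X)$: every matrix unit $e_{xy}$ has finite propagation $d_X(x,y)$, so the linear span of the matrix units, whose norm closure is $\mathbb K(l^2(X))$, lies in $C_u^*(X)$. Consequently any central $m$ as above commutes with every compact operator. Since $M_\rho\subset\mathbb B(l^2(X))$ and the commutant of $\mathbb K(l^2(X))$ in $\mathbb B(l^2(X))$ is $\mathbb C\cdot 1$ (the defining representation of the compacts is irreducible, so Schur's lemma applies), I conclude that every element of $HH^0(C_u^*(X),M_\rho)$ is a scalar multiple $\lambda\cdot 1$ of the identity operator. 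Conversely, any scalar $\lambda\cdot 1$ that happens to lie in $M_\rho$ is automatically central, so it remains only to decide for which $\rho$ the identity belongs to $M_\rho$.

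The second step settles this last point using Section 2. Applying Lemma \ref{functions0} to the constant function $f\equiv 1$, the identity lies in $M_\rho$ if and only if $\sup_{x\in X\setminus D_n}1\to 0$, that is, if and only if $D_n=X$ for some $n\in\mathbb N$; by Corollary \ref{functions1} this is in turn equivalent to $M_\rho=C_u^*(X)$. Therefore, if $M_\rho=C_u^*(X)$ the space of central elements is exactly $\mathbb C\cdot 1\cong\mathbb C$, while if $M_\rho\neq C_u^*(X)$ the only admissible scalar is $0$, giving $HH^0(C_u^*(X),M_\rho)=0$.

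I expect no serious obstacle: the argument is essentially formal once the inclusion $\mathbb K(l^2(X))\subset C_u^*(X)$ and the scalar-commutant fact are in hand. The only point needing a little care is the membership criterion $1\in M_\rho$, which is precisely what Lemma \ref{functions0} and Corollary \ref{functions1} supply; in particular the bounded geometry hypothesis is not required for this statement.
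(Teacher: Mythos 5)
Your proof is correct and follows essentially the same route as the paper: identify $HH^0$ with the set of elements of $M_\rho$ commuting with $C_u^*(X)$, use the inclusion of the compacts in $C_u^*(X)$ and irreducibility to force such elements to be scalar, and then invoke Lemma \ref{functions0} and Corollary \ref{functions1} to decide when $1\in M_\rho$. Your write-up is just a more explicit version of the paper's three-line argument (the paper leaves the application of Lemma \ref{functions0} to $f\equiv 1$ implicit), and your observation that bounded geometry is not needed here is accurate.
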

\begin{proof}
By definition, $HH^0(A,M)=\{m\in M:am-ma=0\mbox{\ for\ any\ }a\in A\}$. As $C_u^*(X)$ contains all compact operators,  $am=ma$ for any $a\in\mathbb K(l^2(X))$ implies that $m$ is scalar. By Corollary \ref{functions1}, only $M=C_u^*(X)$ contains non-zero scalars.

\end{proof}

\section{Derivations of uniform Roe algebras with values in uniform Roe bimodules}

In this section we calculate $HH^1(C_u^*(X),M_\rho)$ for metric space $X$ of bounded geometry using its identification with the linear space of outer derivations of $C_u^*(X)$ with coefficients in $M_\rho$.

\begin{thm}\label{mainThm}
Let $(X,d_X)$ be a discrete metric space of bounded geometry, let $\rho$ be a metric on $X\sqcup X$ compatible with $d_X$, and let $\mathcal D$ be the corresponding expanding sequence. Then $\Out(C_u^*(X),M_\rho)\cong C_h(X,\mathcal D)/C_0(X,\mathcal D)\widetilde{\phantom{a}}$.

\end{thm}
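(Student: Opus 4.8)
The plan is to build the isomorphism from the side that is easy to describe and then prove it is bijective. For $f\in C_h(X,\mathcal D)$, Lemma \ref{functions2} guarantees that $d_f(a):=[a,f]$ is a bounded derivation $C_u^*(X)\to M_\rho$, so $f\mapsto d_f$ is a linear map $C_h(X,\mathcal D)\to\Der(C_u^*(X),M_\rho)$. First I would determine which $d_f$ are inner: $d_f\in\Inn(C_u^*(X),M_\rho)$ means $[a,f]=[a,m]$ for some $m\in M_\rho$ and all $a$, i.e. $f-m$ commutes with $C_u^*(X)$; since $C_u^*(X)\supset\mathbb K(l^2(X))$ its commutant in $\mathbb B(l^2(X))$ is $\mathbb C$, so $d_f$ is inner precisely when $f\in M_\rho+\mathbb C$, which by Lemma \ref{functions0} is exactly $f\in C_0(X,\mathcal D)\widetilde{\phantom{a}}$. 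Hence $f\mapsto[d_f]$ descends to a well-defined injective linear map $C_h(X,\mathcal D)/C_0(X,\mathcal D)\widetilde{\phantom{a}}\hookrightarrow\Out(C_u^*(X),M_\rho)$.

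It then remains to prove surjectivity. Given $d\in\Der(C_u^*(X),M_\rho)$, I would compose with the inclusion $M_\rho\subset\mathbb B(l^2(X))$ and apply Theorem \ref{Hel} (with $H=l^2(X)$ and $\mathbb K(l^2(X))\subset C_u^*(X)\subset\mathbb B(l^2(X))$): there is $T\in\mathbb B(l^2(X))$, unique up to an additive scalar, with $d(a)=[a,T]$ for all $a\in C_u^*(X)$. The goal is to show $T\in l^\infty(X)+M_\rho$. Granting this, write $T=f+m$ with $f\in C_b(X)$ a multiplication operator and $m\in M_\rho$. Then $[a,f]=[a,T]-[a,m]\in M_\rho$ for every $a$, since $[a,T]\in M_\rho$ by hypothesis and $[a,m]\in M_\rho$ because $M_\rho$ is a bimodule; Lemma \ref{functions2} then forces $f\in C_h(X,\mathcal D)$, while $d-d_f=[\,\cdot\,,m]$ is inner. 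Thus $[d]=[d_f]$ lies in the image, which gives surjectivity.

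The whole weight of the argument falls on the inclusion $T\in l^\infty(X)+M_\rho$, and this is the step I expect to be the main obstacle. The hypothesis says exactly that $T$ commutes with the diagonal masa $l^\infty(X)$ modulo $M_\rho$: for every $g\in l^\infty(X)$ we have $[g,T]\in M_\rho$, and for disjoint $E,F\subset X$ the identity $\chi_E T\chi_F=\chi_E[\chi_E,T]\chi_F$ shows $\chi_E T\chi_F\in M_\rho$. This is a Johnson--Parrott type phenomenon, but relative to the bimodule $M_\rho$ in place of the compacts. The natural candidate for $f$ is the diagonal of $T$: averaging $uTu^*$ over the compact abelian group $U(l^\infty(X))=\mathbb T^X$ against its Haar measure collapses all off-diagonal matrix elements (since $\int u(x)\overline{u(y)}\,=\delta_{xy}$) and produces the multiplication operator $M_f$ with $f(x)=T_{xx}$ as the $l^\infty$-central part, while exhibiting $T-M_f$ as a weak-$*$ limit of the elements $T-uTu^*=u[u^*,T]\in M_\rho$. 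The real difficulty is to upgrade this to membership in the norm-closed $M_\rho$.

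For that last point I would return to bounded geometry, as in the proof of Lemma \ref{functions2}. Using the projections $P_n$ onto $l^2(D_n)$ and the estimate $\|b\|\le N_r\sup_{x,y}|b_{xy}|$ valid for operators of propagation $\le r$, one argues that $T-M_f$ is $\rho$-quasi-local, its corners $\chi_E(T-M_f)\chi_F=\chi_E T\chi_F$ becoming small in norm once the $\rho$-distance between $E$ and $F$ is large, while the part of $T-M_f$ anchored in $D_n$ has finite $\rho$-propagation $\le n+r$; together these place $T-M_f$ in $M_\rho$. I expect this passage from the weak-$*$ description to a genuine norm approximation by finite-$\rho$-propagation operators to be the most delicate part, precisely because $T$ need not have controlled $d_X$-propagation and a naive band-by-band reconstruction does not converge. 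Once $T\in l^\infty(X)+M_\rho$ is secured, the remaining bookkeeping---identifying $C_0(X,\mathcal D)\widetilde{\phantom{a}}$ as the inner part via Lemma \ref{functions0} and tracking the scalar ambiguity in $T$ in the two cases $M_\rho=C_u^*(X)$ and $M_\rho\neq C_u^*(X)$---is routine and completes the proof.
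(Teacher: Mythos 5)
Your setup and the injectivity half are correct and essentially match the paper: the map $f\mapsto[\,\cdot\,,f]$, the observation that this derivation is inner precisely when $f\in C_0(X,\mathcal D)\widetilde{\phantom{a}}$ (trivial commutant of $C_u^*(X)$ plus Lemma \ref{functions0}), and the reduction of surjectivity, via Theorem \ref{Hel}, to showing that the implementing operator $T$ lies in $l^\infty(X)+M_\rho$. The gap is exactly where you flag it, and it is a genuine one: you never prove that the off-diagonal part $T-M_f$ lies in the \emph{norm-closed} $M_\rho$. Your averaging over diagonal unitaries only exhibits $T-M_f$ as a mean (weak-$*$ limit) of the operators $u[u^*,T]\in M_\rho$, and $M_\rho$ is not weak-$*$ closed (already for $M_\rho=\mathbb K(l^2(X))$ the weak-$*$ closure is all of $\mathbb B(l^2(X))$), so this by itself gives nothing. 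Your fallback sketch does not close the gap either: membership $\chi_E T\chi_F\in M_\rho$ for $\rho$-separated $E,F$ is not a norm estimate, so no decay of corners follows from it; and even if such decay ($\rho$-quasi-locality) were established, the implication ``quasi-local $\Rightarrow$ in $M_\rho$'' is a Roe-algebra quasi-locality problem that is not available for general bounded geometry spaces. The estimate $\|b\|\leq N_r\sup_{x,y}|b_{xy}|$ you want to recycle from Lemma \ref{functions2} applies only to operators that already have finite propagation, which $T-M_f$ need not have, so a ``band-by-band'' reconstruction indeed does not converge, as you yourself suspect.

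What fills this gap in the paper is the uniform approximability lemma of Lorentz and Willett (Lemma 4.1 of \cite{Lorentz-Willett}), applied to the double $X\sqcup X$ equipped with $\rho$. Writing $\bar u=\left(\begin{smallmatrix}u&0\\0&u\end{smallmatrix}\right)$ and $\bar b_1=\left(\begin{smallmatrix}0&b_1\\0&0\end{smallmatrix}\right)$, the hypothesis $[u,b_1]\in M_\rho$ for all diagonal unitaries $u$ translates into commutators of $\bar b_1$ with $\bar u$ landing in a corner of $C_u^*(X\sqcup X)$, and the lemma then produces, for each $\varepsilon>0$, a \emph{single} $r>0$ such that for \emph{every} $u$ there is $m(u)\in M_\rho$ of propagation less than $r$ with $\|b_1-u^*b_1u-m(u)\|<\varepsilon$. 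This uniformity of $r$ over $u$ is the whole point: the invariant mean of the family $m(u)$ is again an operator of propagation at most $r$, hence lies in $M_\rho$, and since the mean of $u^*b_1u$ vanishes (Lemmas 3.3 and 3.4 of \cite{Lorentz-Willett}, the step you reproduce correctly), one gets $\|b_1-m'\|\leq\varepsilon$ with $m'\in M_\rho$, i.e. $b_1\in M_\rho$ in norm. Without this uniform propagation bound, or a substitute of comparable strength, your argument stops at the weak-$*$ level and the surjectivity half of the theorem remains unproved.
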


\begin{proof}

By Theorem \ref{Hel}, any derivation of $C_u^*(X)$ with values in $\mathbb B(l^2(X))$ is inner. This means that if $d:C_u^*(X)\to M_\rho\subset\mathbb B(l^2(X))$ is a bounded derivation then there exists $b\in\mathbb B(l^2))$ such that $d(a)=[a,b]$ for any $a\in C_u^*(X)$.

Let $b=b_0+b_1$, where $b_0$ (resp., $b_1$) is the diagonal (resp., off-diagonal) part of $b$ with respect to the standard basis of $l^2(X)$. Then $d=d_0+d_1$, where $d_0(\cdot)=[\cdot,b_0]$, $d_1(\cdot)=[\cdot,b_1]$.

To deal with the derivation $d_1$, we follow the argument from \cite{Lorentz-Willett}. Restrict $d_1$ to the commutative $C^*$-algebra $C_b(X)$ of diagonal operators in $C_u^*(X)$. We claim that if $[a,b_1]\in M_\rho$ for any $a\in C_b(X)$ then $b_1\in M_\rho$.

Let $\mathcal U$ be the unitary group of $C_b(X)$, equipped with the discrete topology. As $\mathcal U$ is abelian, it is amenable, and so we may fix a right-invariant mean on $l^\infty(\mathcal U)$. This allows to build a right-invariant, contractive, linear map 
$$
l^\infty(\mathcal U; \mathbb B(l^2(X)))\to\mathbb B(l^2(X)),\qquad \varphi\mapsto\int_{\mathcal U}\varphi(u)d\mu(u). 
$$
This is applied to the function $\varphi_{b_1}:\mathcal U\to \mathbb B(l^2(X))$, $\varphi_{b_1}(u)=u^*b_1u$ to get a bounded operator $b'_1=\int_{\mathcal U}\varphi_{b_1}(u)d\mu(u)\in\mathbb B(l^2(X))$. 
Note that $(b_1)_{xx}=0$ for any $x\in X$, and that $(u^*b_1u)_{xx}=0$ for any $u\in\mathcal U$, so, by Lemma 3.3 of \cite{Lorentz-Willett}, $b'_1$ is off-diagonal. On the other hand, by Lemma 3.4 of \cite{Lorentz-Willett}, $b'_1$ lies in the commutant of $\mathcal U$, hence is diagonal. Thus, $b'_1=0$.

Let us write operators on $H=l^2(X)\oplus l^2(X)$ as 2-by-2 matrices, and apply Lemma 4.1 of \cite{Lorentz-Willett} to the uniform Roe algebra of $X\sqcup X$. 
Set $\bar u=\left(\begin{smallmatrix}u&0\\0&u\end{smallmatrix}\right)$, $\bar b_1=\left(\begin{smallmatrix}0&b_1\\0&0\end{smallmatrix}\right)$. Then 
$$
[\bar u,\bar b_1]=\left(\begin{smallmatrix}0&[u,b_1]\\0&0\end{smallmatrix}\right)\in\left(\begin{smallmatrix}0&M_\rho\\0&0\end{smallmatrix}\right). 
$$
By Lemma 4.1 of \cite{Lorentz-Willett}, for any $\varepsilon>0$ there exists $r>0$ such that for any $u\in\mathcal U$ there exists $\bar m(u)\in C_u^*(X\sqcup X)$ of propagation less than $r$ such that 
\begin{equation}\label{est}
\left\|\left(\begin{smallmatrix}0&b_1-\varphi_{b_1}(u)\\0&0\end{smallmatrix}\right)-\bar m(u)\right\|<\varepsilon. 
\end{equation}
Let $m(u)$ be the upper right corner of $\bar m(u)$. Then $m\in M_\rho$, and (\ref{est}) implies that 
\begin{equation}\label{est2}
\|b_1-\varphi_{b_1}(u)-m(u)\|<\varepsilon.
\end{equation}
Averaging (\ref{est2}) over $\mathcal U$ and taking into account that $b'_1=\int_\mathcal U \varphi_{b_1}(u)d(u)=0$, we conclude that $\|b_1-m'\|<\varepsilon$, where $m'=\int_\mathcal U m(u)d(u)\in M$, hence $b_1\in M$. In other words, the derivation $d_1$ is inner.

Thus, any bounded derivation $d$ is of the form 
\begin{equation}\label{summa}
d(\cdot)=[\cdot,f]+[\cdot,m], 
\end{equation}
$m\in M_\rho$. Since $[\cdot,f]$ is a derivation, $f\in C_h(X,\mathcal D)$ by Lemma \ref{functions2}.

Define a map
$$
j:\Der(C_u^*(X),M_\rho)\to C_h(X,\mathcal D)/C_0(X,\mathcal D)\widetilde{\phantom{a}}\quad\mbox{by}\quad j(d)=f+C_0(X,\mathcal D)\widetilde{\phantom{a}}, 
$$
where $d$ is given by (\ref{summa}). 

If $d(\cdot)=[\cdot,f]+[\cdot,m]=[\cdot,f']+[\cdot,m']$ with $f,f'\in C_h(X,\mathcal D)$ and $m,m'\in M_\rho$ then $[a,f-f'-m+m']=0$ for any $a\in C_u^*(X)$. As $C_u^*(X)$ contains all compact operators, its commutant consists only of scalars, hence, $f-f'-m+m'$ is scalar. Then $g=m-m'$ is a diagonal operator, and  $g\in M_\rho$, hence $g\in C_0(X,\mathcal D)\widetilde{\phantom{a}}$, i.e. $f-f'\in C_0(X,\mathcal D)\widetilde{\phantom{a}}$, so the map $j$ is well defined. Clearly, $j$ is surjective, and its kernel is $C_0(X,\mathcal D)\widetilde{\phantom{a}}$.  

\end{proof}

\begin{cor}
If $M_\rho\neq C_u^*(X)$ then $\Out(C_u^*(X),M_\rho)$ is non-trivial.

\end{cor}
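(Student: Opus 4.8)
The plan is to read off the answer from Theorem \ref{mainThm}, which identifies $\Out(C_u^*(X),M_\rho)$ with the quotient $C_h(X,\mathcal D)/C_0(X,\mathcal D)\widetilde{\phantom{a}}$, and then to prove this quotient is non-zero by exhibiting one explicit function. By Corollary \ref{functions1}, the hypothesis $M_\rho\neq C_u^*(X)$ is equivalent to $D_n\neq X$ for every $n$, so $C_0(X,\mathcal D)$ is non-unital and $C_0(X,\mathcal D)\widetilde{\phantom{a}}$ is genuinely its unitalization, realized inside $C_b(X)$ as $\mathbb C\cdot 1 + C_0(X,\mathcal D)$. Consequently a function lies in $C_0(X,\mathcal D)\widetilde{\phantom{a}}$ precisely when it converges to some constant far from $\mathcal D$, i.e. $\sup_{x\in X\setminus D_n}|f(x)-c|\to 0$ for a scalar $c$. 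It therefore suffices to produce $f\in C_h(X,\mathcal D)$ whose values do not converge to any constant far from $\mathcal D$.

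To turn the metric Higson condition into an analytic one, I would set $s(x)=\rho(x,x')$, so that $x\in X\setminus D_n$ is exactly $s(x)>n$. Since $\rho$ restricted to each copy of $X$ is coarsely equivalent to $d_X$, the triangle inequality $|s(x)-s(y)|=|\rho(x,x')-\rho(y,y')|\le\rho(x,y)+\rho(x',y')$ shows that $d_X(x,y)\le r$ forces $|s(x)-s(y)|\le c(r)$ for a constant $c(r)$ depending only on $r$. Taking $f=\theta\circ s$ for a continuous $\theta\colon[0,\infty)\to[0,1]$, one gets $\var_{x,r}f\le\sup\{|\theta(t)-\theta(s(x))|:|t-s(x)|\le c(r)\}$; because $s(x)\to\infty$ as $x$ leaves every $D_n$, the function $f$ satisfies the Higson condition with respect to $\mathcal D$ as soon as the oscillation of $\theta$ over windows of any fixed width tends to $0$ at infinity.

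The construction of $\theta$ is the crux, and this is where I expect the main obstacle. The set of attained values $S=\{s(x):x\in X\}\subset[0,\infty)$ is unbounded (as each $D_n\neq X$), but it may be very sparse, so a fixed slowly oscillating profile such as $t\mapsto\sin(\log(1+t))$ need not oscillate along $S$ at all. To circumvent this I would first select an increasing sequence $t_1<t_2<\cdots$ in $S$ with widening gaps $t_{k+1}-t_k\to\infty$ (possible since $S$ is unbounded), choose points $y_k\in X$ with $s(y_k)=t_k$, and let $\theta$ be the piecewise-linear function equal to $0$ at $t_{2j}$ and $1$ at $t_{2j+1}$. The growing gaps force the slopes of $\theta$, and hence its oscillation over any fixed window, to vanish at infinity, giving $f=\theta\circ s\in C_h(X,\mathcal D)$; meanwhile $f(y_k)=\theta(t_k)$ alternates between $0$ and $1$ while $t_k\to\infty$, so the $y_k$ leave every $D_n$ and $f$ fails to converge far from $\mathcal D$. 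Thus $f\notin C_0(X,\mathcal D)\widetilde{\phantom{a}}$, the class of $f$ is a non-zero element of $C_h(X,\mathcal D)/C_0(X,\mathcal D)\widetilde{\phantom{a}}$, and the corollary follows from Theorem \ref{mainThm}.
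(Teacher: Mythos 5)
Your proof is correct, and it takes a genuinely different route from the paper's. The paper's construction is geometric: using Corollary \ref{functions1} it picks points $x_n$ with $\rho(x_n,x_n')>2^{n+2}$ that are also $d_X$-separated ($d_X(x_n,\{x_1,\ldots,x_{n-1}\})>2^{n+1}$), and takes $f$ to be a sum of tent functions of height $1$ and slope $2^{-n}$ supported on the disjoint balls $B_{2^n}(x_n)$; the decaying slopes give the Higson condition, and $f(x_n)=1$ shows $f\notin C_0(X,\mathcal D)$. You instead work with the escape function $s(x)=\rho(x,x')$ itself and compose it with a slowly oscillating profile $\theta$ calibrated to the attained values of $s$; your remark that a fixed profile such as $\sin(\log(1+t))$ can fail when the value set is sparse is exactly right, and the adaptive piecewise-linear $\theta$ with widening gaps resolves it. Your route also buys something substantive: Theorem \ref{mainThm} requires $f\notin C_0(X,\mathcal D)\widetilde{\phantom{a}}=\mathbb C\cdot 1+C_0(X,\mathcal D)$, not merely $f\notin C_0(X,\mathcal D)$, and the paper's proof only verifies the latter. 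Indeed, for an unlucky admissible choice of the $x_n$ the paper's function can even be identically $1$, hence zero in the quotient: take $X=\{z_n\}$ with $d_X(z_n,z_m)=|4^n-4^m|$ and a compatible $\rho$ (constructed, e.g., as $\rho^{\mathcal E}$ in Section 3) with $\rho(z_n,z_n')>2^{n+2}$ for every $n$; then $x_n=z_n$ is an admissible choice, each ball $B_{2^n}(x_n)$ is the singleton $\{x_n\}$, and $f\equiv 1$. Your $f$, by contrast, oscillates between $0$ and $1$ along points $y_k$ that leave every $D_n$, so it cannot agree with any constant modulo $C_0(X,\mathcal D)$, which is exactly what the quotient in Theorem \ref{mainThm} demands. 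In short, the paper's tent-function argument is shorter and more visual, while yours is slightly more elaborate but airtight on the unitalization point, effectively closing a small gap in the paper's own verification.
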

\begin{proof}
By Corollary \ref{functions1}, for every $n\in\mathbb N$ there exists $x_n\in X$ such that $\rho(x_n,x'_n)>2^{n+2}$. Without loss of generality, we may assume also that $d_X(x_n,\{x_1,\ldots,x_{n-1}\})>2^{n+1}$. Let $B_{2^n}(x_n)$ be the ball of radius $2^n$ centered at $x_n$. Note that these balls do not intersect. Define a function $f\in C_b(X)$ by 
$$
f(x)=\left\lbrace\begin{array}{cl}\frac{2^n-d_X(x,x_n)}{2^n}&\mbox{if\ }x\in B_{2^n}(x_n)\mbox{\ for\ some\ }n;\\0&\mbox{otherwise.}\end{array}\right.
$$
Clearly, $f\in C_h(X,\mathcal D)\setminus C_0(X,\mathcal D)$.

\end{proof}

\section{A remark on higher Hochschild cohomology}

A similar argument can show that the odd Hochschild cohomology is non-trivial.

\begin{thm}
If $M_\rho\neq C_u^*(X)$ then $HH^{2k+1}(C_u^*(X),M_\rho)$ contains a copy of $C_h(X,\mathcal D)/C_0(X,\mathcal D)\widetilde{\phantom{a}}$ for any $k\in\mathbb N$.

\end{thm}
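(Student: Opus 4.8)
The plan is to leverage the computation already completed in Theorem~\ref{mainThm} by constructing an explicit cup-product-type embedding of the degree-one classes into degree $2k+1$. Since $HH^1(C_u^*(X),M_\rho)\supset\Out(C_u^*(X),M_\rho)\cong C_h(X,\mathcal D)/C_0(X,\mathcal D)\widetilde{\phantom{a}}$ is already known to contain this quotient (nontrivially, by the Corollary above), the goal is to promote a nonzero class in degree $1$ to a nonzero class in each odd degree $2k+1$. The natural tool is the cup product $HH^1(C_u^*(X),M_\rho)\otimes HH^{2k}(C_u^*(X),C_u^*(X))\to HH^{2k+1}(C_u^*(X),M_\rho)$, pairing our derivation class with a suitable class in the even Hochschild cohomology of $C_u^*(X)$ with trivial coefficients. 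Since $C_u^*(X)$ contains $\mathbb K(l^2(X))$ and sits inside $\mathbb B(l^2(X))$, one expects enough structure to produce a nonzero even-degree class to cup against.

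First I would fix a function $f\in C_h(X,\mathcal D)\setminus C_0(X,\mathcal D)$ (the one constructed in the preceding Corollary works) and its associated inner-modulo-$M_\rho$ derivation $d(\cdot)=[\cdot,f]$, representing a nonzero class in $\Out(C_u^*(X),M_\rho)$. Next I would exhibit a cocycle $\psi\in C^{2k}(C_u^*(X),C_u^*(X))$ whose class in $HH^{2k}(C_u^*(X),C_u^*(X))$ survives the cup product; the most concrete candidate is a product of $k$ copies of a degree-two cocycle, or a cocycle built from commutators with fixed diagonal (Higson-type) multipliers so that the geometry of $\mathcal D$ is preserved. Then I would form the cup product $d\smile\psi\in C^{2k+1}(C_u^*(X),M_\rho)$, check it is a cocycle (automatic, since the cup product of cocycles is a cocycle), and—this is the crux—verify it is not a coboundary. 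To detect nontriviality I would restrict the resulting $(2k+1)$-cochain to the commutative subalgebra $C_b(X)$ of diagonal operators, exactly as in the proof of Theorem~\ref{mainThm}, and read off a nonzero component living in $C_h(X,\mathcal D)/C_0(X,\mathcal D)\widetilde{\phantom{a}}$; injectivity of the assignment $f\mapsto[d\smile\psi]$ then gives the claimed copy of the quotient.

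The main obstacle I anticipate is proving nontriviality, i.e.\ that the embedding $C_h(X,\mathcal D)/C_0(X,\mathcal D)\widetilde{\phantom{a}}\hookrightarrow HH^{2k+1}$ is injective rather than merely producing some cocycle. A cup product of a nonzero class with a nonzero class can still vanish in cohomology, so the pairing against $HH^{2k}(C_u^*(X),C_u^*(X))$ must be chosen so that a degree-lowering (restriction or slant-product) map recovers the original degree-one class. Concretely, the difficulty is to build, or to invoke, a left-inverse at the cochain level: a natural transformation $HH^{2k+1}(C_u^*(X),M_\rho)\to HH^1(C_u^*(X),M_\rho)$ that sends $[d\smile\psi]$ back to $[d]$. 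One clean way to secure this is to choose $\psi$ so that cupping with it is a split injection—for instance if $\psi$ is the $k$-fold power of a fixed degree-two class $\omega$ with the property that slant product against a dual class $\omega^\vee$ gives the identity on the relevant subspace. Verifying that such $\omega$ exists for $C_u^*(X)$, and that the slant product interacts correctly with the restriction-to-$C_b(X)$ argument that underlies Theorem~\ref{mainThm}, is where the real work lies; the bounded-geometry hypothesis and Theorem~\ref{Hel} should again be the levers that make the off-diagonal contributions inner and isolate the Higson part.
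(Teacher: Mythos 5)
Your proposal is a program rather than a proof: both of its load-bearing ingredients are left unconstructed, and there is evidence in the paper itself that they cannot be supplied. First, you need a nonzero class in $HH^{2k}(C_u^*(X),C_u^*(X))$ to cup against, but you exhibit none; the paper explicitly states that even the second cohomology is out of reach, so there is no known supply of even-degree classes. Your ``most concrete candidate'' fares badly: the multiplication cochain $\mu_{2k}(a_1,\ldots,a_{2k})=a_1\cdots a_{2k}$ is indeed a $2k$-cocycle (the middle terms of $\delta_{2k}$ cancel since $\sum_{i=1}^{2k}(-1)^i=0$), but it is a coboundary, $\mu_{2k}=\delta_{2k-1}(\mu_{2k-1})$, because $\sum_{i=1}^{2k-1}(-1)^i=-1$; so cupping with it cannot by itself produce nonzero classes. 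Second, the splitting mechanism you hope for (a dual class $\omega^\vee$ and a slant product acting as a left inverse) is never built, and the paper's final Proposition --- that $\smallsmile:HH^1(C_u^*(X),\mathbb K)\times HH^1(C_u^*(X),\mathbb K)\to HH^2(C_u^*(X),\mathbb K)$ is identically zero --- is direct evidence that K\"unneth-type splittings fail in exactly this setting. Thus the crux you yourself identify (``a cup product of nonzero classes can still vanish'') is never overcome, and the proposal stalls precisely there.

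The paper's own route is different: for $f\in C_h(X,\mathcal D)$ it takes $\psi(a_1,\ldots,a_{2k})=a_1\cdots a_{2k}f$, a cochain valued in the \emph{larger} bimodule $\mathbb B(l^2(X))$, and sets $\phi=\delta_{2k}(\psi)$, which equals $a_1\cdots a_{2k}[a_{2k+1},f]$, is $M_\rho$-valued by Lemma \ref{functions2}, and is automatically a cocycle; nontriviality is claimed on the sole ground that the primitive $\psi$ takes values outside $M_\rho$. You should be aware, however, that the obstacle you flagged bites here as well: $\phi$ is exactly the cup product of $\mu_{2k}$ with the derivation $[\cdot,f]$, and since $\mu_{2k}=\delta_{2k-1}(\mu_{2k-1})$, the $M_\rho$-valued cochain $\chi(a_1,\ldots,a_{2k})=a_1\cdots a_{2k-1}[a_{2k},f]$ satisfies, by the Leibniz rule $[a_{2k}a_{2k+1},f]=a_{2k}[a_{2k+1},f]+[a_{2k},f]a_{2k+1}$,
$$
\delta_{2k}(\chi)(a_1,\ldots,a_{2k+1})=a_1\cdots a_{2k-1}\bigl([a_{2k}a_{2k+1},f]-[a_{2k},f]a_{2k+1}\bigr)=a_1\cdots a_{2k}[a_{2k+1},f]=\phi(a_1,\ldots,a_{2k+1}).
$$
So for $k\geq 1$ the cocycle $\phi$ has a primitive \emph{inside} $C^{2k}(C_u^*(X),M_\rho)$, and the existence of the outside primitive $\psi$ does not certify $[\phi]\neq 0$; uniqueness of primitives is exactly what fails. (For $k=0$ a primitive is a single element $m\in M_\rho$, and the claim does hold --- that is Theorem \ref{mainThm}.) In short: your approach has a genuine, unfilled gap, and filling it requires an idea that neither your outline nor, read carefully, the paper's own construction provides.
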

\begin{proof}
Let $\psi\in C^{2k}(C_u^*(X),\mathbb B(l^2(X)))$ be given by $\psi(a_1,\ldots,a_{2k})=a_1\cdots a_{2k}f$, where $f\in C_h(X,\mathcal D)$, $a_1,\ldots,a_{2k+1}\in C_u^*(X)$. Then we have 
$$
\phi(a_1,\ldots,a_{2k+1})=\delta_{2k}(\psi)(a_1,\ldots,a_{2k+1})=\pm a_1\cdots a_{2k}[a_{2k+1},f].
$$
Clearly, $\phi\in C^{2k+1}(C_u^*(X),M_\rho)$, and $\delta_{2k+1}(\phi)=0$, hence $[\phi]\in HH^{2k+1}(C_u^*(X),M_\rho)$. It is not zero unless $f\in C_0(X,\mathcal D)\widetilde{\phantom{a}}$, as $\psi$ takes values outside $M_\rho$.  

\end{proof}

The even case is more difficult, and we cannot evaluate even the second cohomology, but we shall give some calculations for the case of the minimal uniform Roe bimodule, which is $\mathbb K(l^2(X))$. For shortness' sake we write $\mathbb K$ for $\mathbb K(l^2(X))$.

Recall that if we complete the algebraic tensor product $\mathbb K\odot_\mathbb K\mathbb K$ with respect to an appropriate norm then the formula $a\otimes b\mapsto ab$, $a,b\in\mathbb K$, defines an isometric isomorphism $\mathbb K\odot_\mathbb K\mathbb K\to\mathbb K$. We write $\mathbb K\otimes_\mathbb K\mathbb K$ for this completion, and identify it with $\mathbb K$ via the map $a\otimes b\mapsto ab$, $a,b\in\mathbb K$.

Recall that, given two $A$-bimodules over a Banach algebra $A$, the cup product for the Hochschild cohomology is defined as $[\alpha]\smallsmile[\beta]\in HH^{p+q}(A,M\otimes_A N)$ for $[\alpha]\in HH^p(A,M)$, $[\beta]\in HH^q(A,N)$. When $p=q=1$, $\alpha:A\to M$, $\beta:A\to N$, then $(\alpha\smallsmile\beta)(a,b)=\alpha(a)\otimes\beta(b)$, $a,b\in A$. If the K\"unneth theorem would hold in this situation, non-triviality of the first Hochschild cohomology would imply non-trivility of the second one. We shall show that this is not the case.  
\begin{prop} 
The map
$$
\smallsmile:HH^1(C_u^*(X),\mathbb K)\times HH^1(C_u^*(X),\mathbb K)\to HH^2(C_u^*(X),\mathbb K)
$$ 
is zero.

\end{prop}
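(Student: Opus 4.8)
\emph{Proposal.} The plan is to vanish the cup product already at the cochain level, for a convenient choice of representatives, by writing down an explicit primitive. First I would reduce to nice representatives using Theorem~\ref{mainThm}. Since here $M_\rho=\mathbb K$, the associated expanding sequence is the sequence of balls, so $C_h(X,\mathcal D)=C_h(X)$ and $C_0(X,\mathcal D)=C_0(X)$, and every class in $HH^1(C_u^*(X),\mathbb K)\cong\Out(C_u^*(X),\mathbb K)$ is represented by a derivation of the form $a\mapsto[a,f]$, where $f\in C_h(X)$ is viewed as a multiplication operator on $l^2(X)$; indeed $[a,f]\in\mathbb K$ for every $a\in C_u^*(X)$ by Lemma~\ref{functions2}. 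I would therefore fix two classes and represent them by $\alpha(a)=[a,f]$ and $\beta(a)=[a,g]$ with $f,g\in C_h(X)$.

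Next I would spell out the cup product cochain. Under the bimodule isomorphism $\mathbb K\otimes_{\mathbb K}\mathbb K\cong\mathbb K$ given by $x\otimes y\mapsto xy$, the formula $(\alpha\smallsmile\beta)(a,b)=\alpha(a)\otimes\beta(b)$ becomes $(\alpha\smallsmile\beta)(a,b)=[a,f][b,g]$ for $a,b\in C_u^*(X)$. I would then exhibit a primitive: define $\psi\in C^1(C_u^*(X),\mathbb K)$ by $\psi(a)=f[a,g]$. This cochain is bounded, since $\|\psi(a)\|\leq 2\|f\|\,\|g\|\,\|a\|$, and it is genuinely $\mathbb K$-valued because $[a,g]\in\mathbb K$ (again by Lemma~\ref{functions2}) and $\mathbb K$ is an ideal, so $f[a,g]\in\mathbb K$ even though $f\notin\mathbb K$. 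Expanding the Hochschild differential $\delta_1(\psi)(a,b)=a\psi(b)-\psi(ab)+\psi(a)b$ and observing that the two terms $fgab$ cancel, one obtains $\delta_1(\psi)(a,b)=[a,f][b,g]$. Hence $\alpha\smallsmile\beta=\delta_1(\psi)$ is a coboundary, so $[\alpha]\smallsmile[\beta]=0$ in $HH^2(C_u^*(X),\mathbb K)$. The computation is routine once the correct primitive $\psi(a)=f[a,g]$ is guessed (the variant $[a,f]g$ does not produce the right cross terms).

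The step requiring genuine care, and the conceptual heart of the argument, is that the primitive lands in the coefficient module $\mathbb K$ rather than merely in $\mathbb B(l^2(X))$. The derivations $\alpha$ and $\beta$ are inner already in $\mathbb B(l^2(X))$, being implemented by the bounded multiplication operators $f$ and $g$; consequently their cup product is formally a coboundary in $\mathbb B(l^2(X))$, and the only real content is to arrange the witnessing $1$-cochain to be compact-valued. This is exactly what the choice $\psi(a)=f[a,g]$ accomplishes, precisely because the commutator $[a,g]$ is compact whenever $g$ is a Higson function. I would emphasize this point, since it is what distinguishes the present computation from the trivial vanishing one would get if the coefficient module were all of $\mathbb B(l^2(X))$, and it is the mechanism behind the failure of the K\"unneth theorem flagged before the statement.
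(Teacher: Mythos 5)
Your proposal is correct and takes essentially the same approach as the paper: represent the two classes by derivations $[\cdot,f]$, $[\cdot,g]$ with $f,g\in C_h(X)$ via Theorem~\ref{mainThm}, then kill the cup product at the cochain level with an explicit bounded primitive that lands in $\mathbb K$ because commutators with Higson functions are compact and $\mathbb K$ is an ideal. The only difference is cosmetic: the paper uses $\psi(a)=[a,f]g$ where you use $\psi(a)=f[a,g]$, and your parenthetical claim that the variant $[a,f]g$ fails is not quite right --- it gives $\delta_1(\psi)(a,b)=-[a,f][b,g]$, which is equally sufficient for the vanishing of the class (your choice merely produces the identity with the correct sign).
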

\begin{proof}
If $\alpha$, $\beta$ are derivations of $C_u^*(X)$ with coefficients in $\mathbb K$ then, by Theorem \ref{mainThm}, $\alpha(\cdot)=[\cdot,f]$, $\beta(\cdot)=[\cdot,g]$ for some $f,g\in C_h(X)$. Then $(\alpha\smallsmile\beta)(a,b)=[a,f][b,g]$.
But $\alpha\smallsmile\beta=\delta_1(\psi)$, where $\psi(a)=[a,f]g$. As $[a,f]\in\mathbb K$, $\psi(a)\in\mathbb K$  for any $a\in C_u^*(X)$, thus $[\alpha]\smallsmile[\beta]=0$.

\end{proof}

\end{document}